\newcommand \comment[1]{}			
\newtheorem{lem}{Lemma}
\newtheorem{cor}[lem]{Corollary}
\newtheorem{thm}[lem]{Theorem}
  \newtheorem{exama}[lem]{Example}
\newenvironment{exam}{\begin{exama}\rm}{\end{exama}}
\newcommand \myexam[1]{\smallskip\begin{example}[\emph{#1}]}
\renewcommand{\phi}{\varphi}
\renewcommand\emptyset{\varnothing}
\newcommand\lcmd{\operatorname{lcmd}}
\newcommand\lcm{\operatorname{lcm}}
\newcommand\LCM{\mathop{\operatorname{LCM}}}
\begin{document}

\title[Determinants in the Kronecker product of matrices]{Determinants in the Kronecker product of matrices:  The incidence matrix of a complete graph}

\author{Christopher R.H.\ Hanusa}
\address{Department of Mathematics\\
Queens College (CUNY)\\
65-30 Kissena Blvd.\ \\
Flushing, NY 11367, U.S.A.\ \\
phone: 718-997-5964}
\email{\tt chanusa@qc.cuny.edu}

\author{Thomas Zaslavsky}
\address{Department of Mathematical Sciences\\
Binghamton University (SUNY)\\
Binghamton, NY 13902-6000, U.S.A.\ }
\email{\tt zaslav@math.binghamton.edu}

\begin{abstract}
We investigate the least common multiple of all subdeterminants, 
$\lcmd(A\otimes B)$, of a Kronecker product of matrices, of which one is an integral matrix $A$ with two columns and the other is the incidence matrix of a complete graph with $n$ vertices.  We prove that this quantity is the least common multiple of $\lcmd(A)$ to the power $n-1$ and certain binomial functions of the entries of $A$. 
\end{abstract}

\subjclass[2000]{15A15, 05C50, 15A57}

\keywords{Kronecker product, determinant, least common multiple, incidence matrix of complete graph, matrix minor}

\maketitle
\pagestyle{headings}

\section{Introduction}

In a study of non-attacking placements of chess pieces, Chaiken, Hanusa, and Zaslavsky \cite{QQ} were led to a quasipolynomial formula that depends in part on the least common multiple of the determinants of all square submatrices of a certain Kronecker product matrix, namely, the Kronecker product of an integral $2 \times 2$ matrix $A$ with the incidence matrix of a complete graph.  We give a compact expression for the least common multiple of the subdeterminants of this product matrix, generalized to $A$ of order $m \times 2$.

\section{Background}

\subsection*{Kronecker product}
For matrices $A=(a_{ij})_{m\times k}$ and $B=(b_{ij})_{n\times l}$, the Kronecker product $A\otimes B$ is defined to be the $mn \times kl$ block matrix
\begin{equation*}
\left(\begin{array}{c:c:c} 
a_{11}B & \cdots & a_{1k}B \\ \hdashline 
\vdots & \ddots & \vdots \\ \hdashline 
a_{m1}B & \cdots & a_{mk}B \\
\end{array}\right).
\end{equation*}
It is known (see \cite{HJ}, for example) that when $A$ and $B$ are square matrices of orders $m$ and $n$, respectively, then $\det(A\otimes B)=\det(A)^n\det(B)^m$.  

\subsection*{The $\lcmd$ operation}
The quantity we want to compute is $\lcmd(A\otimes B)$, where for an integer matrix $M$, the notation $\lcmd(M)$ denotes the least common multiple of the determinants of all square submatrices of $M$.  This is a much stronger question, as the matrices $A$ and $B$ are most likely not square and the result depends on all square submatrices of their Kronecker product.  We discuss properties of this operation in Section~\ref{sec:properties}, after introducing our main result in Section~\ref{sec:main}.

\subsection*{Incidence matrix}
For a simple graph $G=(V,E)$, the incidence matrix $D(G)$ is a $|V|\times|E|$ matrix with a row corresponding to each vertex in $V$ and a column corresponding to each edge in $E$.  For a column that corresponds to an edge $e=vw$, there are exactly two non-zero entries: one $+1$ and one $-1$ in the rows corresponding to $v$ and $w$.  The sign assignment is arbitrary.  
The complete graph $K_n$ is the graph on $n$ vertices $v_1,\hdots,v_n$ with an edge between every pair of vertices.  Its incidence matrix has order $n \times \binom{n}{2}$.  

\medskip
Of interest in this article are Kronecker products of the form $A~\otimes~D(K_n)$.  

\begin{exam}\label{X:illustrative} 
We present an illustrative example that we will revisit in the proof of our main theorem.  We consider $K_4$ to have vertices $v_1$ through $v_4$, corresponding to rows $1$ through $4$ of $D(K_4)$, and edges $e_1$ through $e_6$, corresponding to columns $1$ through $6$ of $D(K_4)$. One of the many incidence matrices for $K_4$ is the $4\times 6$ matrix
\begin{equation*}
D(K_4)=\begin{pmatrix} 
1 & 1 & 1 & 0 & 0 & 0 \\
-1 & 0 & 0 & 1 & 1 & 0 \\
0 & -1 & 0 & -1 & 0 & 1 \\
0 & 0 & -1 & 0 & -1 & -1 \\
\end{pmatrix}.
\end{equation*}
If $A$ is the $3 \times 2$ matrix $\begin{pmatrix} a_{11} & a_{12} \\ a_{21} & a_{22} \\ a_{31} & a_{32}\\ \end{pmatrix}$, we investigate the Kronecker product 
\begin{gather*}
A \otimes D(K_4) = \\
\small
\left(\begin{array}{c@{\ }c@{\ }c@{\ }c@{\ }c@{\ }c:c@{\ }c@{\ }c@{\ }c@{\ }c@{\ }c} 
a_{11} & a_{11} & a_{11} & 0 & 0 & 0 & a_{12} & a_{12} & a_{12} & 0 & 0 & 0 \\
-a_{11} & 0 & 0 & a_{11} & a_{11} & 0 & -a_{12} & 0 & 0 & a_{12} & a_{12} & 0 \\
0 & -a_{11} & 0 & -a_{11} & 0 & a_{11} & 0 & -a_{12} & 0 & -a_{12} & 0 & a_{12} \\
0 & 0 & -a_{11} & 0 & -a_{11} & -a_{11} & 0 & 0 & -a_{12} & 0 & -a_{12} & -a_{12} \\\hdashline 
 a_{21} &  a_{21} &  a_{21} &  0 &  0 &  0 &  a_{22} &  a_{22} &  a_{22} &  0 &  0 &  0 \\
-a_{21} &  0 &  0 &  a_{21} &  a_{21} &  0 & -a_{22} &  0 &  0 &  a_{22} &  a_{22} &  0 \\
 0 & -a_{21} &  0 & -a_{21} &  0 &  a_{21} &  0 & -a_{22} &  0 & -a_{22} &  0 &  a_{22} \\
 0 &  0 & -a_{21} &  0 & -a_{21} & -a_{21} &  0 &  0 & -a_{22} &  0 & -a_{22} & -a_{22} \\\hdashline 
 a_{31} &  a_{31} &  a_{31} &  0 &  0 &  0 &  a_{32} &  a_{32} &  a_{32} &  0 &  0 &  0 \\
-a_{31} &  0 &  0 &  a_{31} &  a_{31} &  0 & -a_{32} &  0 &  0 &  a_{32} &  a_{32} &  0 \\
 0 & -a_{31} &  0 & -a_{31} &  0 &  a_{31} &  0 & -a_{32} &  0 & -a_{32} &  0 &  a_{32} \\
 0 &  0 & -a_{31} &  0 & -a_{31} & -a_{31} &  0 &  0 & -a_{32} &  0 & -a_{32} & -a_{32} \\
\end{array}\right).
\normalsize
\end{gather*}
\end{exam}

\subsection*{Submatrix notation}
Let $A=(a_{ij})$ be an $m\times 2$ matrix; this makes $A\otimes D(K_n)$ an $mn\times n(n-1)$ matrix with non-zero entries $\pm a_{ij}$.  
We introduce new notation for some matrices that will arise naturally in our theorem.  For $i,j\in[m]:=\{1,2,\ldots,m\}$, we write $A^{i,j}$ to represent $\begin{pmatrix} a_{i1} & a_{i2} \\ a_{j1} & a_{j2} \end{pmatrix}$.  If $I$ is a multisubset of $[m]$, we define $a_{Ik}$ to be the product $\prod_{i\in I} a_{ik}$.  If $I$ and $J$ are multisubsets of $[m]$, we define $A^{I,J}$ to be the matrix $\begin{pmatrix} a_{I1} & a_{I2} \\ a_{J1} & a_{J2} \end{pmatrix}.$ In this notation, 
$$
\lcmd A = \lcm\big( \LCM_{i,k} a_{ik}, \LCM_{i,j} \det A^{i,j} \big),
$$
where $\LCM$ denotes the least common multiple of non-zero quantities taken over all indicated pairs of indices.

\section{Main Theorem and Main Corollary}
\label{sec:main}

Let 
\begin{align*}
\mathcal{K}_m := \{ (I,J) : \ & I, J \text{ are multisubsets of } [m] \ \\& \text{such that } |I|=|J| \text{ and } I \cap J = \emptyset \}.
\end{align*}
Recall that a \emph{subdeterminant} or \emph{minor} of a matrix is the determinant of a square submatrix.

\begin{thm}
Let $A$ be an $m\times 2$ matrix, not identically zero, and $n \geq 1$.  The least common multiple of all subdeterminants of $A\otimes D(K_n)$ is
\begin{equation}
\begin{aligned}
&\lcmd\big(A\otimes D(K_n)\big) \\
&\qquad = \lcm\bigg(
(\lcmd A)^{n-1}, \LCM_{\mathcal{K}} \, \Big[\prod_{(I_s,J_s)\in\mathcal{K}}\det A^{I_s,J_s}\Big]\bigg),
\label{eq:lcmr}
\end{aligned}
\end{equation}
where $\LCM_\mathcal{K}$ denotes the least common multiple of non-zero quantities taken over all collections $\mathcal{K} \subseteq \mathcal{K}_m$ such that $2 \sum_{(I,J)\in\mathcal{K}} |I| \leq n$.
\label{thm:lcmr}
\end{thm}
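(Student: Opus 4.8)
The plan is to prove the two inequalities of divisibility separately: that every subdeterminant of $A\otimes D(K_n)$ divides the right-hand side of \eqref{eq:lcmr}, and conversely that each factor appearing on the right-hand side is realized (up to sign) as a subdeterminant, or at least divides some subdeterminant. The combinatorial heart of the matter is to understand what a square submatrix of $A\otimes D(K_n)$ looks like. A choice of rows amounts to choosing, for each $i\in[m]$, a subset $V_i\subseteq V(K_n)$ of vertices; a choice of columns amounts to choosing, for each column-block $k\in\{1,2\}$, a subset $E_k\subseteq E(K_n)$ of edges. Since $D(K_n)$ is the incidence matrix of a graph, I expect to exploit the classical fact that the nonsingular square submatrices of $D(K_n)$ correspond to spanning-forest-like edge sets, and more generally that a maximal nonsingular submatrix on a given vertex set restricted to a given edge set has determinant $\pm 1$ and corresponds to a forest on those vertices whose components each contain a selected row-vertex. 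This should let me put an arbitrary square submatrix of $A\otimes D(K_n)$, after row and column operations, into a block-triangular form whose diagonal blocks are governed by the connected pieces of the relevant subgraph.

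The key structural step is a \emph{graph-decomposition argument}. Given a square submatrix $N$ of $A\otimes D(K_n)$, I would associate to it a bipartite-like combinatorial object on $V(K_n)$: each edge used in $N$ may be "activated" in block $1$, in block $2$, or both, and each vertex may be "used" in row-block $i$ for various $i$. Expanding $\det N$ by the structure of incidence matrices, I expect to get a product over connected components of an auxiliary multigraph, where an isolated-type component on a single vertex-class contributes a single entry $a_{ik}$, a tree-type component contributes a $2\times 2$ determinant $\det A^{i,j}$ (this is the $n=2$ / edge case, which already appears implicitly in Example~\ref{X:illustrative}), and a component that uses an edge in \emph{both} blocks forces a doubling that, iterated, produces the products $\prod \det A^{I_s,J_s}$ with $|I_s|=|J_s|$ and $I_s\cap J_s=\emptyset$. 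The constraint $2\sum_{(I,J)\in\mathcal{K}}|I|\le n$ should emerge as a counting bound: each $(I,J)$ with $|I|=|J|=t$ consumes a path or cycle through at least $2t$ distinct vertices of $K_n$, and these vertex sets are forced to be disjoint across the collection, so their total size is at most $n$. The $(\lcmd A)^{n-1}$ term should come from the "forest part": a spanning-forest contribution on $n$ vertices has $n-1$ edges, each contributing a factor that divides $\lcmd A$, and these combine multiplicatively.

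For the \emph{upper bound} (every subdeterminant divides the RHS), the above decomposition should be carried out carefully: reduce $N$ by column operations within each block to a form supported on a forest, track how the $a_{ik}$'s and $\det A^{i,j}$'s accumulate along components, and invoke the bound on the number and sizes of the "both-blocks" components to conclude that $\det N$ divides a product of the form $(\lcmd A)^{p}\cdot\prod_s\det A^{I_s,J_s}$ with $p\le n-1$ and $2\sum|I_s|\le n$; since each such product divides the RHS, we are done. For the \emph{lower bound} (the RHS divides $\lcmd(A\otimes D(K_n))$), I would exhibit explicit submatrices: for $(\lcmd A)^{n-1}$, take $n-1$ vertex-disjoint (in the appropriate blocked sense) copies of the $2\times 2$ or $1\times 1$ configurations that realize the various factors of $\lcmd A$, arranged along a spanning path of $K_n$ so that the submatrix is block-diagonal with those blocks on the diagonal; for a given collection $\mathcal{K}$ with $2\sum|I|\le n$, build, for each $(I_s,J_s)$ with $|I_s|=|J_s|=t_s$, a configuration on a fresh path of $2t_s$ vertices of $K_n$ (using the same edge in both blocks alternately) whose determinant is $\pm\det A^{I_s,J_s}$, and take these block-diagonally. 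The vertex-disjointness, guaranteed by $2\sum t_s\le n$, makes the whole submatrix block-diagonal, so its determinant is the product $\pm\prod_s\det A^{I_s,J_s}$, as needed.

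**Main obstacle.** I expect the principal difficulty to be the precise bookkeeping in the decomposition of $\det N$: showing rigorously that the only determinantal factors that can arise are exactly the $a_{ik}$'s and the products $\prod\det A^{I_s,J_s}$ over $(I,J)\in\mathcal{K}_m$ — in particular that the multisets $I_s,J_s$ are forced to satisfy $|I_s|=|J_s|$ and $I_s\cap J_s=\emptyset$, and that distinct pairs in the collection must live on disjoint vertex sets of $K_n$. This requires a careful analysis of how choosing the same edge of $K_n$ in both column-blocks interacts with the $\pm1$ incidence structure, and an inductive argument that "splices" such doublings into longer alternating paths. Getting the vertex-count inequality $2\sum|I|\le n$ exactly right (rather than off by a constant) is where the combinatorics must be done with care; everything else should reduce to the standard theory of incidence matrices plus multilinearity of the determinant.
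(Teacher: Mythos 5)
Your strategy is essentially the paper's own proof: the paper likewise reads a square submatrix $N$ as a choice of vertex sets $V_1,\dots,V_m$ and edge sets $E_1,E_2$, extracts a factor $\det A^{i,j}$ by row operations for each vertex duplicated between two row-blocks, strips rows and columns with a single nonzero entry, and is left with block-diagonal cycle-incidence blocks whose determinants $y_1\cdots y_p-z_1\cdots z_p$ collapse to $\pm\big(\prod a_{ik}^{s_{ik}}\big)\det A^{I,J}$ according to where the closed walk switches between $E_1$ and $E_2$, with the bounds $2\sum|I|\le n$ and the exponent $n-1$ obtained by counting simple and duplicated vertices exactly as you anticipate, and with the same sharpness constructions (a doubled path for $(\det A^{i,j})^{n-1}$, disjoint alternating cycles and doubled edges for $\prod_s\det A^{I_s,J_s}$). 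The bookkeeping you flag as the main obstacle is indeed where the paper spends its effort, and its resolution is the cycle-block analysis just described rather than a reduction to forests.
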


The proof, which is long, is in Section \ref{proof} at the end of this article. Although the expression is not as simple as we wanted, we were fortunate to find it; it seems to be a much harder problem to get a similar formula when $A$ has more than two columns.

Note that it is only necessary to take the $\LCM$ component over all maximal collections $\mathcal{K}$, that is, collections $\mathcal{K}$ satisfying $\sum|I_s|=\lfloor n/2 \rfloor$.

When understanding the right-hand side of Equation \eqref{eq:lcmr}, it may be instructive to notice that the $\LCM$ factor on the right-hand side divides 
\begin{equation*}
\prod_{\substack{\textup{disjoint $I,J$:}\\|I|=|J|=p}} \, (\det A^{I,J})^{\lfloor n/2p \rfloor},
\end{equation*}
since the largest number of individual $\det A^{I,J}$ factors that may occur for disjoint $p$-member multisubsets $I$ and $J$ of $[m]$ is $\lfloor n/2p \rfloor$.

\medskip
When $m=2$, the only pair of disjoint $p$-member multisubsets of $[m]$ is $\{1^p\}$ and $\{2^p\}$.  From this, we have the following corollary.

\begin{cor}
Let $A$ be a $2\times 2$ matrix, not identically zero, and $n \geq 1$.  The least common multiple of all subdeterminants of $A\otimes D(K_n)$ is
\begin{align*}
&\lcmd\big(A\otimes D(K_n)\big) \\
&\qquad = \lcm\big((\lcmd A)^{n-1}, \LCM_{p=2}^{\lfloor n/2 \rfloor} \big((a_{11}a_{22})^p-(a_{12}a_{21})^p \big)^{\lfloor n/2p \rfloor}\big),
\label{eq:lcm2}
\end{align*}
where $\LCM$ denotes the least common multiple over the range of $p$.
\label{thm:lcm2}
\end{cor}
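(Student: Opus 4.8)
The plan is to derive the corollary directly from Theorem~\ref{thm:lcmr} by specializing to $m=2$ and making all the objects $\mathcal{K}$, $A^{I,J}$ appearing in \eqref{eq:lcmr} fully explicit. First I would observe that when $m=2$, every multisubset of $[m]=\{1,2\}$ is of the form $\{1^r 2^s\}$, and the disjointness condition $I\cap J=\emptyset$ together with $|I|=|J|=p$ forces $\{I,J\}=\{\{1^p\},\{2^p\}\}$ (up to order), so there is exactly one unordered pair of disjoint $p$-member multisubsets for each $p\ge 1$. Next I would compute $\det A^{I,J}$ for this pair: by the submatrix notation, $a_{\{1^p\}k}=a_{1k}^{\,p}$ and $a_{\{2^p\}k}=a_{2k}^{\,p}$, so
$$
\det A^{\{1^p\},\{2^p\}}=\det\begin{pmatrix}a_{11}^{\,p}&a_{12}^{\,p}\\ a_{21}^{\,p}&a_{22}^{\,p}\end{pmatrix}=a_{11}^{\,p}a_{22}^{\,p}-a_{12}^{\,p}a_{21}^{\,p}=(a_{11}a_{22})^p-(a_{12}a_{21})^p,
$$
and the ordered pair $(\{2^p\},\{1^p\})$ merely negates this, which does not affect least common multiples.

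With this in hand, I would analyze the inner $\LCM_{\mathcal K}$ over collections $\mathcal K\subseteq\mathcal K_2$ satisfying $2\sum_{(I,J)\in\mathcal K}|I|\le n$. Since the only available building blocks are the pairs $(\{1^p\},\{2^p\})$, a collection $\mathcal K$ is (as a multiset) determined by how many times it uses each value of $p$; if $\mathcal K$ uses the pair for exponent $p$ a total of $m_p$ times, the constraint becomes $2\sum_p p\,m_p\le n$ and the corresponding product is $\prod_p\big((a_{11}a_{22})^p-(a_{12}a_{21})^p\big)^{m_p}$. For a fixed $p$, the largest allowable $m_p$ when only that $p$ is used is $\lfloor n/2p\rfloor$, and taking the $\LCM$ over all valid collections is the same as taking, for each $p$ separately, the $(\lfloor n/2p\rfloor)$-th power of $\big((a_{11}a_{22})^p-(a_{12}a_{21})^p\big)$ and then the $\LCM$ over $p$; this is precisely the observation, recorded after the theorem, that the $\LCM$ component divides $\prod_{p}(\det A^{\{1^p\},\{2^p\}})^{\lfloor n/2p\rfloor}$, and here that divisibility is an equality because each single-$p$ collection is itself admissible. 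The range of $p$ is $1\le p\le\lfloor n/2\rfloor$ (larger $p$ give an empty range of $m_p\ge 1$); for $p=1$ the term is $a_{11}a_{22}-a_{12}a_{21}=\det A$ raised to $\lfloor n/2\rfloor$, which already divides $(\lcmd A)^{n-1}$ whenever $n\ge 3$ and is subsumed by the $(\lcmd A)^{n-1}$ summand in all cases, so the $p=1$ contribution may be folded into that summand, leaving the stated range $p=2,\dots,\lfloor n/2\rfloor$.

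Assembling these observations, the right-hand side of \eqref{eq:lcmr} becomes exactly
$$
\lcm\Big((\lcmd A)^{n-1},\ \LCM_{p=2}^{\lfloor n/2\rfloor}\big((a_{11}a_{22})^p-(a_{12}a_{21})^p\big)^{\lfloor n/2p\rfloor}\Big),
$$
which is the claimed formula. The main obstacle is the bookkeeping in the middle step: one must be careful that allowing $\mathcal K$ to mix several values of $p$ does not produce anything not already obtained from the single-$p$ collections, i.e.\ that $\LCM$ commutes appropriately with forming these products, and that the power $\lfloor n/2p\rfloor$ is genuinely attainable (not merely an upper bound) for each $p\le\lfloor n/2\rfloor$; both follow from the elementary fact that $\lcm(x^a y^b,\dots)$ over admissible exponent vectors equals $\lcm$ of the pure powers $x^{a_{\max}},y^{b_{\max}},\dots$. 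I would also note the degenerate small-$n$ cases ($n=1,2$, where the $\LCM$ range is empty and the formula reduces to $(\lcmd A)^{n-1}$) for completeness, but these require no separate argument.
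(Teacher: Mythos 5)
Your overall route---specialize Theorem \ref{thm:lcmr} to $m=2$, note that the unique unordered pair of disjoint $p$-member multisubsets of $\{1,2\}$ is $\{1^p\},\{2^p\}$, and compute $\det A^{\{1^p\},\{2^p\}}=(a_{11}a_{22})^p-(a_{12}a_{21})^p$---is exactly the paper's (which records only that one-line observation before stating the corollary), and those parts are correct, as is folding the $p=1$ term into $(\lcmd A)^{n-1}$. The problem is the step you yourself flag as the main obstacle. The ``elementary fact'' you invoke---that the $\lcm$ of $\prod_p X_p^{m_p}$ over all admissible exponent vectors equals the $\lcm$ of the pure maximal powers $X_p^{M_p}$---is false whenever the $X_p$ share common factors, and here they do: writing $X_p=(a_{11}a_{22})^p-(a_{12}a_{21})^p$, one always has $X_1\mid X_p$, and the $X_p$ can share additional $2$-power factors. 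A mixed collection contributes the \emph{product} of its factors, and a product is not in general bounded by the $\lcm$ of the individual maximal powers. (Note that the paper's own remark after the theorem only asserts divisibility into the \emph{product} $\prod_p(\det A^{I,J})^{\lfloor n/2p\rfloor}$, which is a genuinely weaker statement than the corollary's $\LCM$.)

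The failure is concrete. Take $A=\begin{pmatrix}9&7\\1&1\end{pmatrix}$ and $n=6$, so $\lcmd A=\lcm(9,7,2)=126$, $X_2=81-49=2^5$, $X_3=729-343=2\cdot193$, and the corollary's right-hand side is $\lcm(126^5,\,2^5,\,2\cdot193)$, with $2$-adic valuation $5$. But the collection $\mathcal K=\{(\{1\},\{2\}),(\{1^2\},\{2^2\})\}$ is admissible ($2(1+2)\le 6$) and contributes $X_1X_2=2\cdot32=2^6$ to the theorem's $\LCM_{\mathcal K}$. This is not an artifact of the theorem: the $6\times6$ submatrix of $A\otimes D(K_6)$ formed from the two copies of the edge $v_1v_2$ (with $v_1\in V_1$, $v_2\in V_2$) together with the $4$-cycle $v_3v_4v_5v_6$ alternating between $V_1$ and $V_2$ and between $E_1$ and $E_2$ is block diagonal with blocks $\begin{pmatrix}9&7\\-1&-1\end{pmatrix}$ and a weighted cycle matrix of determinant $\pm32$, so its determinant is $\pm64$, and $2^6$ divides $\lcmd(A\otimes D(K_6))$ by definition. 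Hence mixed collections are \emph{not} subsumed by the single-$p$ maximal ones together with $(\lcmd A)^{n-1}$; your argument breaks at that point, and in fact the corollary as stated cannot be derived from Theorem \ref{thm:lcmr} by this (or any) route---the $\LCM_{p}$ expression undercounts unless one either keeps the full $\LCM_{\mathcal K}$ over products or imposes coprimality hypotheses on the $X_p$.
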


\section{Properties of the $\lcmd$ Operation}
\label{sec:properties}

Four kinds of operation on $A$ do not affect the value of $\lcmd A$:  permuting rows or columns, duplicating rows or columns, adjoining rows or columns of an identity matrix, and transposition.  The first two will not change the value of $\lcmd (A \otimes D(K_n))$.  However, the latter two may.  
According to Corollary \ref{thm:lcm2}, transposing a $2\times2$ matrix $A$ does not alter $\lcmd\big(A\otimes D(K_n)\big)$; but when $m>2$ that is no longer the case, as Example 2 shows.  Adding columns of an identity matrix also may change the l.c.m.d., even when $A$ is $2\times2$; also see Example 2.  However, we may freely adjoin rows of $I_2$ if $A$ has two columns.

\begin{cor}
Let $A$ be an $m \times 2$ matrix, not identically zero, and $n \geq 1$.  Let $A'$ be $A$ with any rows of the $2\times2$ identity matrix adjoined.  Then 
$$
\lcmd\big(A'\otimes D(K_n)\big) = \lcmd\big(A\otimes D(K_n)\big).
$$
\label{C:addIrows}
\end{cor}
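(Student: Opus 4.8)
The plan is to derive Corollary~\ref{C:addIrows} directly from the formula in Theorem~\ref{thm:lcmr}, by showing that adjoining rows of $I_2$ to $A$ changes neither factor on the right-hand side of Equation~\eqref{eq:lcmr}. Write $A'$ for $A$ with the rows $(1,0)$ and/or $(0,1)$ adjoined; it suffices to treat the case where a single such row, say $r=(1,0)$, is adjoined, and then iterate. Let $m'=m+1$ be the number of rows of $A'$, and let $\mu$ be the index of the new row, so $a'_{\mu 1}=1$ and $a'_{\mu 2}=0$.

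\textbf{Step 1: the $(\lcmd A)^{n-1}$ factor is unchanged.} Using the formula $\lcmd A = \lcm\big(\LCM_{i,k} a_{ik},\ \LCM_{i,j}\det A^{i,j}\big)$, I would check that $\lcmd A' = \lcmd A$. The new entries contributed are $a'_{\mu 1}=1$ and $a'_{\mu 2}=0$, and $0$ is excluded as it is not a non-zero quantity while $1$ divides everything; the new $2\times2$ minors are $\det A'^{\mu,j} = a'_{\mu 1}a_{j2}-a'_{\mu 2}a_{j1} = a_{j2}$ and $\det A'^{i,\mu} = -a_{i2}$, which are entries of $A$ already accounted for in $\lcmd A$. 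Hence $\lcmd A'=\lcmd A$, so the first factor is literally the same.

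\textbf{Step 2: the $\LCM_{\mathcal K}$ factor is unchanged.} This is the substantive step. I must compare the set of products $\prod_{(I_s,J_s)\in\mathcal K}\det A'^{I_s,J_s}$ over admissible collections $\mathcal K\subseteq\mathcal K_{m'}$ with the corresponding set for $\mathcal K_m$. The key computation is that, because the new row is $(1,0)$, one has $a'_{I1}=a_{(I\setminus\mu)1}$ and $a'_{I2}=0$ whenever $\mu\in I$ (here a multisubset $I$ of $[m']$ may contain $\mu$ with some multiplicity $t\ge1$, and $a'_{I2}=0^t\cdot a_{(I\setminus\mu)2}=0$). Consequently, for a pair $(I,J)\in\mathcal K_{m'}$ with $I\cap J=\emptyset$ and $|I|=|J|$:
\begin{itemize}\end{itemize}
if $\mu\notin I\cup J$, then $A'^{I,J}=A^{I,J}$;
if $\mu\in I$ (so $\mu\notin J$, and $J$ is a genuine multisubset of $[m]$), then
\[
\det A'^{I,J} \;=\; a'_{I1}a'_{J2}-a'_{I2}a'_{J1} \;=\; a_{(I\setminus\mu)1}\,a_{J2}\;=\;\det A^{(I\setminus\mu)\cup J,\,\emptyset\text{-padded}}\,?
\]
— more precisely $\det A'^{I,J}=a_{(I\setminus\mu)1}a_{J2}$ is a \emph{monomial} in the entries of $A$, which is exactly a product of the form $a_{K1}a_{L2}$; and symmetrically if $\mu\in J$. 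The plan is then to argue two divisibilities. First, every product $\prod_s\det A'^{I_s,J_s}$ over an admissible $\mathcal K\subseteq\mathcal K_{m'}$ divides the right-hand side of \eqref{eq:lcmr} for $A$: the pairs with $\mu\notin I_s\cup J_s$ contribute genuine $\det A^{I_s,J_s}$ factors from $\mathcal K_m$, and the pairs involving $\mu$ contribute pure monomials $\prod a_{ik}$, each of which divides $\lcmd A$, hence divides $(\lcmd A)^{n-1}$ provided $n\ge2$ (the case $n=1$ being trivial since $\mathcal K=\emptyset$ is the only admissible collection and $\lcmd(A'\otimes D(K_1))=\lcmd(A\otimes D(K_1))$ as $D(K_1)$ is the $1\times0$ empty matrix). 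Moreover the constraint $2\sum|I_s|\le n$ on $\mathcal K_{m'}$ restricts to the same constraint on the sub-collection of pairs not involving $\mu$. Conversely, every admissible $\mathcal K\subseteq\mathcal K_m$ is also an admissible subset of $\mathcal K_{m'}$ (since $[m]\subseteq[m']$), so the $\LCM_{\mathcal K}$ factor for $A$ divides that for $A'$. Combining, the two $\LCM_{\mathcal K}$ factors are equal after taking the $\lcm$ with $(\lcmd A)^{n-1}$, which is all that \eqref{eq:lcmr} requires.

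\textbf{The main obstacle} I anticipate is the bookkeeping in Step 2 when $\mu$ occurs in $I_s$ with multiplicity greater than one, or when several pairs of the collection involve $\mu$: I need to be careful that the monomial $a'_{I_s2}$ genuinely vanishes (so that $\det A'^{I_s,J_s}$ is a monomial in one column of $A$, not a true $2\times2$-minor-like quantity), and that the resulting monomial really does divide $\lcmd A$ rather than just some power of it — this is where the hypothesis that $A$ has exactly two columns, so that the new row $(1,0)$ or $(0,1)$ kills one of the two columns of any $A'^{I,J}$ containing it, is essential. Once that observation is in hand, the divisibility arguments are routine. I would close by remarking, as the paper already notes, that adjoining \emph{columns} of $I_2$ is a different matter because then the new unit column does not interact with the incidence structure of $D(K_n)$ in the same way, consistent with Example~2.
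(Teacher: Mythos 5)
Your proposal follows essentially the same route as the paper's proof: reduce to adjoining a single row $(1,0)$, observe that $\lcmd A' = \lcmd A$, and note that any pair $(I_s,J_s)$ involving the new index $\mu$ degenerates $\det A'^{I_s,J_s}$ to a monomial in the entries of $A$ that is absorbed by $(\lcmd A)^{n-1}$, while every collection not involving $\mu$ already occurs for $A$. Your Step 1 and the multiplicity bookkeeping are simply more explicit versions of what the paper dismisses as obvious, so the argument is correct and matches the paper's.
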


\begin{proof}
It suffices to consider the case where $A'$ is $A$ adjoin an $(m+1)^\textup{st}$ row $\begin{pmatrix} 1 & 0 \end{pmatrix}$. It is obvious that $\lcmd A' = \lcmd A$; this accounts for the first component of the least common multiple in Equation \eqref{eq:lcmr}.  

For the second component, any $\mathcal{K}$ that appears in the $\LCM$ for $A$ also appears for $A'$. 
Suppose $\mathcal{K'}$ is a collection that appears only for $A'$; this implies that in $\mathcal{K'}$ there exist pairs $(I_s,J_s)$ such that $m+1 \in I_s$ (or $J_s$, but that case is similar).  
Since $a_{I_s2} = 0$, $\det A^{I_s,J_s} = a_{I_s1} a_{J_t2}$, which is a product of at most $n-1$ elements of $A$.  This, in turn, divides $(\lcmd A)^{n-1}$.  We conclude that the right-hand side of Equation \eqref{eq:lcmr} is the same for $A'$ as for $A$.
\end{proof}

We do not know whether or not adjoining a row of the identity matrix to an $m \times l$ matrix $A$ preserves $\lcmd\big(A\otimes D(K_n)\big)$ when $l>2$.  Limited calculations give the impression that this may indeed be true.

\section{Examples}

We calculate a few examples with matrices $A$ that are related to those needed for the chess-piece problem of \cite{QQ}.  In that kind of problem the matrix of interest is $M \otimes D(K_n)^T$ where $M$ is an $m \times 2$ matrix.  Hence, in Theorem \ref{thm:lcmr} we want $A = M^T$, so Theorem \ref{thm:lcmr} applies only when $m \leq 2$.
\medskip    

\noindent
{\bf Example 1.}  
When the chess piece is the bishop, $M$ is the $2\times 2$ symmetric matrix 
$$
M_B = \begin{pmatrix} 1 & 1 \\ 1 & -1 \end{pmatrix}.
$$
We apply Corollary \ref{thm:lcm2} with $A=M_B$, noting that $\lcmd(A)=2$.  We get
$$
\lcmd\big(A\otimes D(K_n)\big) = \lcm\big(2^{n-1}, \LCM_{p=2}^{\lfloor n/2 \rfloor} \big((-1)^p-1^p \big)^{\lfloor n/2p \rfloor}\big).
$$
The $\LCM$ generates powers of $2$ no larger than $2^{n/2}$, hence $\lcmd\big(M_B\otimes D(K_n)\big) = 2^{n-1}$.

\medskip
\noindent
{\bf Example 2.}  
When the chess piece is the queen, $M$ is the $4\times 2$ matrix 
$
M_Q = \begin{pmatrix} I \\ M_B \end{pmatrix}
$
with $\lcmd(A)=2$.  Then our matrix 
$A = M_Q^T = \begin{pmatrix} I & M_B \end{pmatrix}$.  
Since $M_Q^T$ has four columns Theorem \ref{thm:lcmr} does not apply.  In fact, we found that $\lcmd\big(M_Q^T\otimes D(K_4)\big) = 24$, quite different from $\lcmd\big(M_B \otimes D(K_4)\big) = 8.$

However, if we take $A = M_Q$ instead of $M_Q^T$, Corollary \ref{C:addIrows} applies; we conclude that $\lcmd\big(M_Q \otimes D(K_n)\big) = \lcmd\big(M_B\otimes D(K_n)\big) = 2^{n-1}$.  

Thus, $A = M_Q$ is an example where transposing $A$ changes the value of $\lcmd\big(A \otimes D(K_n) \big)$ dramatically.

\medskip
\noindent
{\bf Example 3.}  
A more difficult example is the fairy chess piece known as a nightrider, which moves an unlimited distance in the directions of a knight.  Here $M$ is the $4 \times 2$ matrix 
$$
M_N = \begin{pmatrix} 1 & 2 \\ 2 & 1 \\ 1 & -2 \\ 2 & -1 \end{pmatrix}.
$$
We can use Theorem \ref{thm:lcmr} to calculate $\lcmd\big(M_N \otimes D(K_n)\big)$.  
Since what is needed for the chess problem is $\lcmd\big(M_N^T \otimes D(K_n)\big)$, this example does not help in \cite{QQ}; nevertheless it makes an interestingly complicated application of Theorem \ref{thm:lcmr}.

The submatrices
$$
\begin{pmatrix} 1 & 2 \\ 2 & 1 \end{pmatrix},
\begin{pmatrix} 1 & 2 \\ 1 & -2 \end{pmatrix}, \textup{ and }
\begin{pmatrix} 1 & 2 \\ 2 & -1 \end{pmatrix},
$$
with determinants $-3$, $-4$, and $-5$, respectively, lead to the conclusion that $\lcmd(A)=60$.  Every pair $(I,J)$ of disjoint $p$-member multisubsets of $[4]$ has one of the following seven forms, up to the order of $I$ and $J$:  
\begin{gather*}
\big( \{1^q\}, \{2^r,3^s,4^t\} \big), \quad 
\big( \{2^r\}, \{1^q,3^s,4^t\} \big),\\
\big( \{3^s\}, \{1^q,2^r,4^t\} \big),\quad
\big( \{4^t\}, \{1^q,2^r,3^s\} \big),\\
\big( \{1^q,2^r\}, \{3^s,4^t\} \big), \quad
\big( \{1^q,3^s\}, \{2^r,4^t\} \big), \quad
\big( \{1^q,4^t\}, \{2^r,3^s\} \big),
\end{gather*}
where the sum of the exponents in each multisubset is $p$, and where $q$, $r$, $s$, and $t$ may be zero.  
It turns out that $\det A^{I,J}$ has the same form in all seven cases: precisely $\pm 2^{u}(2^{2p-2u}\pm 1)$, where $u$ is a number between $0$ and $p$.  
Furthermore, every value of $u$ from $0$ to $p$ appears and every choice of plus or minus sign appears (except when $u = p$) in $\det A^{I,J}$ for some choice of $(I,J)$.  We present two representative examples that support this assertion.

The case of $I=\{1^q\}$ and $J=\{2^r,3^s,4^t\}$.  Then 
$$A^{I,J}=\begin{pmatrix} 1^q & 2^q \\ 2^r1^s2^t & 1^r(-2)^s(-1)^t \end{pmatrix},$$
with $q=r+s+t=p$.  We can rewrite $\det A^{I,J}$ as ${}\pm 2^s - 2^{2p-s}=-2^s(2^{2p-2s}\pm1)$.  The only instance in where there is no choice of sign is when $s=p$ and $r=t=0$, in which case $\det A^{I,J}$ simplifies to either $0$ or $-2^{p+1}$.

The case of $I=\{1^q,2^r\}$ and $J=\{3^s,4^t\}$.  Then 
$$A^{I,J}=\begin{pmatrix} 1^q2^r & 2^q1^r \\ 1^s2^t & (-2)^s(-1)^t \end{pmatrix},$$
where $q+r=s+t=p$.  For this choice of $I$ and $J$, $\det A^{I,J}=(-1)^p2^{r+s}-2^{2p-r-s}$.

Since every $\det A^{I,J}$ has the same form, and at most $\lfloor p/2n\rfloor$ factors of type $(2^{2p-2u}\pm 1)$ may occur at the same time, the $\LCM$ in Equation \eqref{eq:lcmr} is exactly
$$
\LCM_{\mathcal{K}} \, \bigg(\prod_{(I_s,J_s)\in\mathcal{K}}\det A^{I_s,J_s}\bigg) = 2^{N}\LCM_{\substack{1\leq p\leq n/2 \\ 0\leq u\leq p-1}} (2^{2p-2u} \pm 1)^{\lfloor n/2p \rfloor},
$$
for some $N\leq n$.  We conclude that
$$
\lcmd\big(A\otimes D(K_n)\big) = \lcm(60^{n-1},\LCM_{\substack{1\leq p\leq n/2 \\ 0\leq u\leq p-1}} (2^{2p-2u} \pm 1)^{\lfloor n/2p \rfloor}).
$$
As a sample of the type of answer we get, when $n=8$ this expression is 
\begin{align*}
\lcmd&\big(A\otimes D(K_8)\big) \\
& =  \lcm(60^{7}, (4\pm 1)^{\lfloor 8/2\rfloor}, (16\pm 1)^{\lfloor 8/4\rfloor}, (64\pm 1)^{\lfloor 8/6\rfloor},(256\pm 1)^{\lfloor 8/8\rfloor}) \\
& =  60^7\cdot 7 \cdot 13 \cdot 17^2\cdot 257.
\end{align*}
The first few values of $n$ give the following numbers:
\medskip

\noindent
\begin{tabular}{@{}r|l|l@{}}
$n$ & $\lcmd\big(A\otimes D(K_8)\big)$ & (factored) \\ \hline
$2$ & 60 & $60^1$ \\
$3$ & 3600 & $60^2$ \\
$4$ & 3672000 & $60^3\cdot 17$ \\
$5$ & 220320000 & $60^4 \cdot 17$ \\
$6$ & 1202947200000 & $60^5 \cdot 7 \cdot 13 \cdot 17$ \\
$7$ & 72176832000000 & $60^6 \cdot 7 \cdot 13 \cdot 17$ \\
$8$ & 18920434740480000000 & $60^7\cdot 7 \cdot 13 \cdot 17^2\cdot 257$ \\
$9$ & 1135226084428800000000 & $60^8 \cdot 7 \cdot 13 \cdot 17^2 \cdot 257$ \\
$10$ & 952295753183943168000000000 & $60^9 \cdot 7 \cdot 11 \cdot 13 \cdot 17^2 \cdot 31 \cdot 41 \cdot 257$ \\
\end{tabular}

\section{Remarks}\label{remarks}

We hope to determine in the future whether $\lcmd(A\otimes B)$ has a simple form for arbitrary matrices $A$ and $B$.  Our limited experimental data suggests this may be difficult.  However, we think at least some generalization of Theorem \ref{thm:lcmr} is possible.  

We would like to understand, at minimum, why the theorem as stated fails when $B = D(K_n)$ and $A$ has more than two columns.

Another direction worth investigating is the number-theoretic aspects of Theorem \ref{thm:lcmr}.  

\section{Proof of the Main Theorem}\label{proof}

During the proof we refer from time to time to Example \ref{X:illustrative}, which will give a concrete illustration of the many steps.  We assume $a_{11}$, $a_{12}$, $a_{21}$, $a_{22}$, $a_{31}$, and $a_{32}$ are non-zero constants.  

\subsection{Calculating the determinant of a submatrix}
Consider an $l\times l$ submatrix $N$ of $A\otimes D(K_n)$.  We wish to evaluate the determinant of $N$ and show that it divides the right-hand side of Equation \eqref{eq:lcmr}.  We need consider only matrices $N$ whose determinant is not zero, since a matrix with $\det N = 0$ has no effect on the least common multiple.

Since $D(K_n)$ is constructed from a graph, we will analyze $N$ from a graphic perspective.  The matrix $N$ is a choice of $l$ rows and $l$ columns from $A\otimes D(K_n)$.  This corresponds to a choice of $l$ vertices and $l$ edges from $K_n$ where we are allowed to choose up to $m$ copies of each vertex and up to two copies of an edge.  Another way to say this is that we are choosing $m$ subsets of $V(K_n)$, say $V_1$ through $V_m$, and two subsets of $E(K_n)$, say $E_1$ and $E_2$, with the property that $\sum_{i=1}^m |V_i|=\sum_{k=1}^2 |E_k|=l$.  From this point of view, if a row in $N$ is taken from the first $n$ rows of $A\otimes D(K_n)$, we are placing the corresponding vertex of $V(K_n)$ in $V_1$, and so on, up through a row from the last $n$ rows of $A\otimes D(K_n)$, which corresponds to a vertex in $V_m$.  We will say that the copy of $v$ in $V_i$ is the {\em $i^\textup{th}$ copy of $v$} and the copy of $e$ in $E_k$ is the {\em $k^\textup{th}$ copy of $e$}.  

The order of $N$ satisfies $l \leq 2n-2$ because, if $N$ had $2n-1$ columns of $A\otimes D(K_n)$, then at least one edge set, $E_1$ or $E_2$, would contain $n$ edges from $K_n$.  The columns corresponding to these edges would form a dependent set of columns in $N$, making $\det N=0$.  

\medskip
In our illustrative example, choose the submatrix $N$ consisting of rows $1$, $5$, $7$, and $10$ and columns $1$, $4$, $7$, and $8$.  Then $N$ is the matrix
\begin{equation*}
N=\begin{pmatrix} 
 a_{11} &  0 &  a_{12} &  a_{12} \\
 a_{21} &  0 &  a_{22} &  a_{22} \\
 0 & -a_{21} &  0 & -a_{22} \\
-a_{31} &  a_{31} & -a_{32} &  0 \\
\end{pmatrix},
\end{equation*}
and in the notation above, $V_1=\{v_1\}$, $V_2=\{v_1,v_3\}$, $V_3=\{v_2\}$, $E_1=\{e_1,e_4\}$, and $E_2=\{e_1,e_2\}$.

\medskip
Returning to the proof, within this framework we will now perform elementary matrix operations on $N$ in order to make its determinant easier to calculate.  We call the resulting matrix the {\em simplified matrix of $N$}.  Each copy of a vertex $v$ has a row in $N$ associated with it; two rows corresponding to two copies of the same vertex contain the same entries except for the different multipliers $a_{ik}$.  For example, if $v$ is a vertex in both $V_1$ and $V_2$, then there is a row corresponding to the first copy with multipliers $a_{11}$ and $a_{12}$ and a row corresponding to the second copy with the same entries multiplied by $a_{21}$ and $a_{22}$.  

There cannot be a vertex in three or more vertex sets since then the corresponding rows of $N$ would be linearly dependent and $\det N$ would be zero.  

When there is a vertex in exactly two vertex sets $V_i$ and $V_j$ corresponding to two rows $R_i$ and $R_j$ in $N$, we perform the following operations depending on the multipliers $a_{i1}$, $a_{i2}$, $a_{j1}$, and $a_{j2}$.  We first notice that $\det A^{i,j}=a_{i1}a_{j2}-a_{i2}a_{j1}$ is non-zero; otherwise, the rows $R_i$ and $R_j$ would be linearly dependent in $N$ and $\det N=0$.  Therefore either both $a_{i1}$ and $a_{j2}$ or both $a_{i2}$ and $a_{j1}$ are non-zero.  In the former case, let us add $-a_{j1}/a_{i1}$ times $R_i$ to $R_j$ in order to zero out the entries corresponding to edges in $E_1$.  The multipliers of entries in $R_j$ corresponding to edges in $E_2$ are now all $\det A^{i,j}/a_{i1}$.  Similarly, we can zero out the entries in $R_i$ corresponding to edges in $E_2$.  Lastly, factor out $\det A^{i,j}/a_{j2}a_{i1}$ from $R_j$.  If on the other hand, either multiplier $a_{i1}$ or $a_{j2}$ is zero, then reverse the roles of $i$ and $j$ in the preceding argument.  These manipulations ensure that the multiplier of every non-zero entry in $N$ that corresponds to an $i^\textup{th}$ vertex and a $k^\textup{th}$ edge is $a_{ik}$. 

The appearance of a denominator, $a_{i1}a_{j2}$, in the factor $\det A^{i,j}/a_{j2}a_{i1}$ is merely an artifact of the construction; we could have cancelled it by factoring out $a_{i1}$ in row $i$ and $a_{j2}$ in row $j$.  However, if we had done this, the entries of the matrix would no longer be of the form $a_{ik}$, $-a_{ik}$, and $0$, which would make the record-keeping in the coming arguments more tedious.

\medskip
In our illustrative example, because $v_1$ is a member of both $V_1$ and $V_2$, we perform row operations on the rows of $N$ corresponding to $v_1$ to yield the simplified matrix of $N$:
\begin{equation*}
N_\textup{simplified}=\begin{pmatrix} 
 a_{11} &  0 &  0 &  0 \\
 0 &  0 &  a_{22} &  a_{22} \\
 0 & -a_{21} &  0 & -a_{22} \\
-a_{31} &  a_{31} & -a_{32} &  0 \\
\end{pmatrix}.
\end{equation*}
The determinants of $N$ and $N_\textup{simplified}$ are related by
$$\det N = \frac{\det A^{1,2}}{a_{11}a_{22}}\det N_\textup{simplified}.$$
The denominator $a_{11}a_{22}$ would disappear if we had chosen to factor out the $a_{11}$ in the first row and the $a_{22}$ in the second row of $N_\textup{simplified}$.

\medskip
Returning to the proof, we assert that the simplified matrix of $N$ has no more that two non-zero entries in any column.  For a column $e$ corresponding to an edge $e=vw$ in $K_n$, each of $v$ and $w$ is either in one vertex set $V_i$ or in two vertex sets $V_i$ and $V_j$.  If the vertex corresponds to two rows in $N$, the above manipulations ensure that there is only one copy of the vertex that has a non-zero multiplier in the column.  Another important quality of this simplification is that if a vertex is in more than one vertex set, then every edge incident with one instance of this repeated vertex is now in the same edge set; more precisely, if $v \in V_i \cap V_j$, then every edge incident with the $i^\textup{th}$ copy of $v$ is in $E_1$ and every edge incident with the $j^\textup{th}$ copy is in $E_2$, or vice versa.

Since we are assuming $\det N \neq 0$, $N$ has at least one non-zero entry in each column or row.  If a row (or column) has exactly one non-zero entry, we can reduce the determinant by expanding in that row (or column).  This contributes that non-zero entry as a factor in the determinant.  After reducing repeatedly in this way, we arrive at a matrix where each column has exactly two non-zero entries, and each row has at least two non-zero entries.  This implies that every row has exactly two non-zero entries as well.  After interchanging the necessary columns and rows and possibly multiplying columns by $-1$, the structure of what we will call the {\em reduced matrix of $N$} is a block diagonal matrix where each block $B$ is a weighted incidence matrix of a cycle, such as
\begin{equation*}
\begin{pmatrix} 
y_1 & 0 & 0 & 0 & 0 & -z_6 \\
-z_1 & y_2 & 0 & 0 & 0 & 0 \\
0 & -z_2 & y_3 & 0 & 0 & 0 \\
0 & 0 & -z_3 & y_4 & 0 & 0 \\
0 & 0 & 0 & -z_4 & y_5 & 0 \\
0 & 0 & 0 & 0 & -z_5 & y_6 \\
\end{pmatrix}\,.
\end{equation*}
The determinant of a $p\times p$ matrix of this type is $y_1\cdots y_p - z_1\cdots z_p$.  Therefore, we can write the determinant of $N$ as the product of powers of entries of $A$, powers of $\det A^{i,j}$, and binomials of this form.

\medskip
In our illustrative example, we simplify the determinant of $N_\textup{simplified}$ by expanding in the first row (contributing a factor of $a_{11}$), and we perform row and column operations to find the reduced matrix of $N$ to be
\begin{equation*}
N_\textup{reduced}=\begin{pmatrix} 
  a_{21} &  0 & -a_{22} \\
 -a_{31} &  a_{32} &  0 \\
  0 & -a_{22} &  a_{22} \\
\end{pmatrix},
\end{equation*}
whose determinant is $a_{21}a_{32}a_{22} - a_{31}a_{22}a_{22}$.

\medskip
Returning to the proof, the entries $y_q$ and $z_q$ are the variables $a_{ik}$, depending on in which vertex sets the rows lie and in which edge sets the columns lie.  If the vertices of $K_n$ corresponding to the rows in $B$ are labeled $v_1$ through $v_p$, this block of the block matrix corresponds to traversing the closed walk $C=v_1v_2\hdots v_pv_1$ in $K_n$ (in this direction).  As a result of the form of the simplified matrix of $N$, for a column that corresponds to an edge $e_q=v_qv_{q+1}$ in $E_k$ traversed from the vertex $v_q$ in vertex set $V_i$ to the vertex $v_{q+1}$ in vertex set $V_j$, the entry $y_q$ is $a_{ik}$ and the entry $z_q$ is $a_{jk}$.  (See Figure \ref{fig:cycle1}.)  Therefore each block $B$ in the block diagonal matrix contributes
\begin{equation}
\det B = \prod_{\substack{ e=v_qv_{q+1}\in C \\ e\in E_k, v_q\in V_i }} a_{ik} 
\ - \prod_{\substack{ e=v_qv_{q+1}\in C \\ e\in E_k, v_{q+1}\in V_j }} a_{jk}
\label{eq:BlockFactor}
\end{equation}
for some closed walk $C$ in $G$, whose length is $p$. 

\medskip
In our illustrative example, $N_\textup{reduced}$ is the incidence matrix of the closed walk  
$$v_3\stackrel{e_4}{\rightarrow}v_2\stackrel{e_1}{\rightarrow}v_1\stackrel{e_2}{\rightarrow}v_3,$$ where vertex $v_3$ is from $V_2$, vertex $v_2$ is from $V_3$, and vertex $v_3$ is from $V_2$.  Moreover, edge $e_4$ is from $E_1$, and edges $e_1$ and $e_2$ are from $E_2$.   Because we are working with a concrete example, we have not relabeled the vertices as we did in the preceding paragraph.

\begin{figure}[t]
\begin{center}
\begin{picture}(0,0)%
\includegraphics{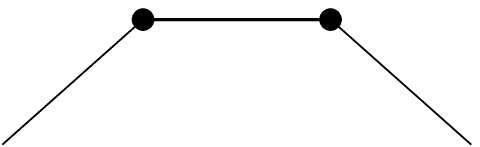}%
\end{picture}%
\setlength{\unitlength}{3947sp}%
\begingroup\makeatletter\ifx\SetFigFont\undefined%
\gdef\SetFigFont#1#2#3#4#5{%
  \reset@font\fontsize{#1}{#2pt}%
  \fontfamily{#3}\fontseries{#4}\fontshape{#5}%
  \selectfont}%
\fi\endgroup%
\begin{picture}(2274,894)(1264,-973)
\put(2401,-886){\makebox(0,0)[b]{\smash{\SetFigFont{12}{14.4}{\rmdefault}{\mddefault}{\updefault}{\color[rgb]{0,0,0}$C$}%
}}}
\put(2401,-286){\makebox(0,0)[b]{\smash{\SetFigFont{10}{12.0}{\familydefault}{\mddefault}{\updefault}{\color[rgb]{0,0,0}$e_q$}%
}}}
\put(3001,-211){\makebox(0,0)[b]{\smash{\SetFigFont{10}{12.0}{\familydefault}{\mddefault}{\updefault}{\color[rgb]{0,0,0}$v_{q+1}$}%
}}}
\put(1801,-211){\makebox(0,0)[b]{\smash{\SetFigFont{10}{12.0}{\rmdefault}{\mddefault}{\updefault}{\color[rgb]{0,0,0}$v_q$}%
}}}
\end{picture}
\end{center}
\caption{An edge $e_q=v_qv_{q+1}$ in the cycle $C$ generated by block $B$.  When $v_q\in V_i$, $v_{q+1}\in V_j$, and $e_q\in E_k$, the contributions $y_q$ and $z_q$ to $\det B$ are $a_{ik}$ and $a_{jk}$, respectively.} 
\label{fig:cycle1}
\end{figure}

\medskip 
Returning to the proof, we can simplify this expression by analyzing exactly what the $a_{ik}$ and $a_{jk}$ are.  Suppose that two consecutive edges $e_{q-1}$ and $e_{q}$ in $C$ are in the same edge set $E_k$, and suppose that the vertex $v_{q}$ that these edges share is in $V_i$. (See Figure \ref{fig:cycle2}.) In this case, both entries $z_{q-1}$ and $y_q$ are $a_{ik}$, which can then be factored out of each product in Equation~\eqref{eq:BlockFactor}.  

A particular case to mention is when the cycle $C$ contains a vertex that has multiple copies in $N$ (not necessarily both in $C$).  In this case, the edges of $C$ incident with this repeated vertex are both from the same edge set, as mentioned earlier.  After factoring out a multiplier for each pair of adjacent edges in the same edge set, all that remains inside the products in Equation~\eqref{eq:BlockFactor} is the contributions of multipliers from vertices where the incident edges are from different edge sets.

\begin{figure}[t]
\begin{center}
\begin{picture}(0,0)%
\includegraphics{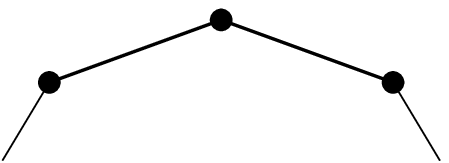}%
\end{picture}%
\setlength{\unitlength}{3947sp}%
\begingroup\makeatletter\ifx\SetFigFont\undefined%
\gdef\SetFigFont#1#2#3#4#5{%
  \reset@font\fontsize{#1}{#2pt}%
  \fontfamily{#3}\fontseries{#4}\fontshape{#5}%
  \selectfont}%
\fi\endgroup%
\begin{picture}(2124,969)(1339,-973)
\put(2851,-341){\makebox(0,0)[b]{\smash{\SetFigFont{10}{12.0}{\familydefault}{\mddefault}{\updefault}{\color[rgb]{0,0,0}$e_q$}%
}}}
\put(2401,-886){\makebox(0,0)[b]{\smash{\SetFigFont{12}{14.4}{\rmdefault}{\mddefault}{\updefault}{\color[rgb]{0,0,0}$C$}%
}}}
\put(2401,-136){\makebox(0,0)[b]{\smash{\SetFigFont{10}{12.0}{\familydefault}{\mddefault}{\updefault}{\color[rgb]{0,0,0}$v_{q}$}%
}}}
\put(1951,-341){\makebox(0,0)[b]{\smash{\SetFigFont{10}{12.0}{\familydefault}{\mddefault}{\updefault}{\color[rgb]{0,0,0}$e_{q-1}$}%
}}}
\end{picture}
\end{center}
\caption{Two consecutive edges $e_{q-1}$ and $e_q$, both incident with vertex $v_q$ in the cycle $C$ generated by block $B$.  When both edges are members of the same edge set $E_k$ and $v_q$ is a member of $V_i$, the contributions $z_{q-1}$ and $y_q$ are both $a_{ik}$, allowing this multiplier to be factored out of Equation \eqref{eq:BlockFactor}.}
\label{fig:cycle2}
\end{figure}

More precisely, when following the closed walk, let $I$ be the multiset of indices $i$ such that the walk $C$ passes from an edge in $E_2$ to an edge in $E_1$ at a vertex in $V_i$.  Similarly, let $J$ be the multiset of indices $j$ such that $C$ passes from an edge in $E_1$ to an edge in $E_2$ at a vertex in $V_j$.  Then what remains inside the products in Equation \eqref{eq:BlockFactor} after factoring out common multipliers is exactly 
\begin{equation*}
\det A^{I,J}= \prod_{i\in I} a_{i1} \prod_{j\in J} a_{j2} - \prod_{j\in J} a_{j2}\prod_{i\in I} a_{i2}.
\end{equation*}

There is one final simplifying step.  Consider a value $i$ occurring in both $I$ and $J$.  In this case, we can factor $a_{i1}a_{i2}$ out of both terms.  This implies that the determinant of each block $B$ of the block diagonal matrix is of the form
\begin{equation}
\pm \bigg(\prod_{i,k} a_{ik}^{s_{ik}} \bigg)
\det A^{I,J} ,
\label{eq:BlockFactor3}
\end{equation}
where the exponents $s_{i,k}$ are non-negative integers, $I$ and $J$ are disjoint subsets of $[m]$ of the same cardinality, and $2|I|+\sum_{i,k} s_{ik}=p$ because the degree of $\det B$ is the order of $B$.  Notice that when $|I|=|J|=1$ (say $I=\{i\}$ and $J=\{j\}$), the factor $\det A^{I,J}$ equals $\det A^{i,j}$.  Combining contributions from the simplification and reduction processes and from all blocks, we have the formula
\begin{equation}
\det N = \pm \prod_{i,j} (\det A^{i,j})^{|V_i\cap V_j|} \prod_{i,k} a_{ik}^{S_{ik}}
\prod_{B} \det A^{I_B,J_B},
\label{eq:detN}
\end{equation}
for some non-negative exponents $S_{ik}$.
We note that $$\sum 2|V_i \cap V_j| + \sum S_{ik} + \sum |I_B| = l.$$

\medskip
In the cycle in our illustrative example, vertex $v_3$ (in $V_2$) transitions from an edge in $E_2$ to an edge in $E_1$ and vertex $v_2$ (in $V_3$) transitions from an edge in $E_1$ to an edge in $E_2$. This implies that $I=\{2\}$ and $J=\{3\}$.  Vertex $v_1$ originally occurred in the two vertex sets $V_1$ and $V_2$; this implies that we can factor out the corresponding multiplier, $a_{22}$.  Indeed, the determinant of $N_\textup{reduced}$ is $a_{21}a_{32}a_{22}-a_{22}^2a_{31}=a_{22}\det A^{2,3}$.  Through these calculations we see that 
$$
\det N=\bigg(\frac{\det A^{1,2}}{a_{11}a_{22}}\bigg)a_{11}a_{22}\det A^{2,3}=\det A^{1,2}\det A^{2,3}.
$$

\subsection{The subdeterminant divides the formula}
We now verify that the product in Equation~\eqref{eq:detN} divides the right-hand side of Equation \eqref{eq:lcmr}.  The exponents $S_{ik}$ can be no larger than $n$ because there are only $n$ rows with entries $a_{ik}$ in $A\otimes D(K_n)$, so the expansion of the determinant, as a polynomial in the variable $a_{ik}$, has degree at most $n$.  Furthermore, it is not possible for the exponent of $a_{ik}$ to be $n$.  The only way this might occur is if $N$ were to contain in $V_i$ all $n$ vertices of $G$ and at least $n$ edges of $E_k$ incident with the vertices of $V_i$.  The corresponding set of columns is a dependent set of columns in $N$ (because the rank of $D(K_n)$ is $n-1$), which would make $\det N=0$. Therefore, $\det N$ contributes no more than $n-1$ factors of any $a_{ik}$ to any term of $\lcmd(A\otimes D(K_n))$. 

Now let us examine the exponents of factors of the form $\det A^{I,J}$ that may divide $\det N$.  Such factors may arise either upon the conversion of $N$ to the simplified matrix of $N$ if $|I|=|J|=1$, or from a block of the reduced matrix as in Equation \eqref{eq:BlockFactor3} if $|I|=|J|\geq1$.

The factors that arise in simplification come from duplicate pairs of vertices: every duplicated vertex $v$ leads to a factor $\det A^{i,j}$ where $v \in V_i \cap V_j$ (this is apparent in Equation \eqref{eq:detN}).  The total number of factors $\det A^{i,j}$ arising from simplification is $\sum_{\{i,j\}} |V_i \cap V_j| = d$, the number of duplicated vertices, which is not more than $n-1$ since $2d \leq l \leq 2(n-1)$.  Since each such factor divides $\lcmd A$, their product divides $(\lcmd A)^{n-1}$, the first component of Equation \eqref{eq:lcmr}.  

The factors $\det A^{I_B,J_B}$ from blocks $B$ of the reduced matrix arise from simple vertices---those which are not duplicated among the rows of $N$.  $I_B$ and $J_B$ are multisets of indices of vertex sets $V_i$ containing simple vertices, $I_B \cap J_B = \emptyset$, and $\sum_B (|I_B|+|J_B|) \leq c$, the number of simple vertices, since a simple vertex appears in only one block.  As $c \leq n$, $\sum_B (|I_B|+|J_B|) \leq n$.  Thus, the product of the corresponding determinants $\det A^{I_B,J_B}$ is one product in the $\LCM_\mathcal{K}$ component of Equation \eqref{eq:lcmr}.

\subsection{The formula is best possible}
We have shown that for every matrix $N$, $\det N$ divides the right-hand side of Equation \eqref{eq:lcmr}.  We now show that there exist graphs that attain the claimed powers of factors.  Consider the path of length $n-1$, $P=v_1v_2\cdots v_n$, as a subgraph of $K_n$.  Create the $(2n-2)\times(2n-2)$ submatrix $N$ of $A\otimes D(K_n)$ with rows corresponding to both an $i^\textup{th}$ copy and a $j^\textup{th}$ copy of vertices $v_1$ through $v_{n-1}$ and columns corresponding to two copies of every edge in $P$.  Then
\begin{equation*}
N = 
\left(\begin{array}{cccc:cccc} 
 a_{i1} &    0   &    0    &   0    &  a_{i2} &    0   &    0    &   0    \\
-a_{i1} & a_{i1} &    0    &   0    & -a_{i2} & a_{i2} &    0    &   0    \\
   0    & \ddots & \ddots  &   0    &    0    & \ddots & \ddots  &   0    \\
   0    &    0   & -a_{i1} & a_{i1} &    0    &    0   & -a_{i2} & a_{i2} \\ \hdashline
 a_{j1} &    0   &    0    &   0    &  a_{j2} &    0   &    0    &   0    \\
-a_{j1} & a_{j1} &    0    &   0    & -a_{j2} & a_{j2} &    0    &   0    \\
   0    & \ddots & \ddots  &   0    &    0    & \ddots & \ddots  &   0    \\
   0    &    0   & -a_{j1} & a_{j1} &    0    &    0   & -a_{j2} & a_{j2} \\ \end{array}\right),
\end{equation*}
with determinant $(\det A^{i,j})^{n-1}$.  The four quadrants of $N$ are $(n-1)\times(n-1)$ submatrices of $A\otimes D(K_n)$ with determinants $a_{i1}^{n-1}$, $a_{i2}^{n-1}$, $a_{j1}^{n-1}$, and $a_{j2}^{n-1}$, respectively.

We show that, for every collection $\mathcal{K}=\{(I_s,J_s)\}) \subseteq \mathcal{K}_m$ satisfying $2\sum|I_s|\leq n$, there is a submatrix $N$ of $A\otimes D(K_n)$ with determinant $\prod_{(I_s,J_s)\in\mathcal{K}}\det A^{I_s,J_s}$.  For each $s$, starting with $s=1$, choose $W_s \subseteq V(K_n)$ to consist of the lowest-numbered unused $n_s=2|I_s|$ vertices.  Thus, $W_s = \{v_{2k+1},\ldots,v_{2k+2n_s}\}$.  Take edges $v_{i-1}v_i$ for $2k+1<i\leq2k+2n_s$ and $v_{2k+1}v_{2k+2n_s}$.  This creates a cycle $C_s$ if $|I_s|>1$ and an edge $e_s$ if $|I_s|=1$.  For a cycle $C_s$, place each odd-indexed vertex of $W_s$ into a vertex set $V_i$ for every $i\in I_s$ and each even-indexed vertex into a vertex set $V_j$ for every $j\in J_s$.  For an edge $e_s$ corresponding to $I_s=\{i\}$ and $J_s=\{j\}$, place both vertices of $W_s$ in $V_i$ and $V_j$.  Place all edges of the form $v_{2m-1}v_{2m}$ into $E_1$ and all edges of the form $v_{2m}v_{2m+1}$ and $v_{2k+1}v_{2k+2n_s}$ into $E_2$.  Note that this puts $e_s$ into both $E_1$ and $E_2$.  

The submatrix $N$ of $A\otimes D(K_n)$ that arises from placing the vertices in numerical order and the edges in cyclic order along $C_s$ is the block-diagonal matrix where each block $N_s$ is a $2|I_s|\times 2|I_s|$ matrix of the form
\begin{equation*}
\begin{pmatrix}
 a_{i_11} &    0    &    0    & \cdots  &    0    & a_{i_12} \\ 
-a_{j_11} & -a_{j_12} &    0    & \cdots  &    0    &   0    \\
   0    &  a_{i_22} &  a_{i_21} &         &         &   0    \\
   0    &    0    & -a_{j_21} & -a_{j_22} &         & \vdots \\ 
 \vdots &  \vdots &         & \ddots  & \ddots  &   0    \\
   0    &    0    & \cdots  &         & -a_{j_l1} & -a_{j_l2}\\
\end{pmatrix}  
\end{equation*}
if $C_s$ is a cycle and 
$$
\begin{pmatrix} a_{i1} & a_{i2} \\ a_{j1} & a_{j2} \end{pmatrix}
$$
if $e_s$ is an edge.
The determinant of $N_s$ is exactly $\det A^{I_s,J_s}$, so the determinant of $N$ is 
$\prod_{(I_s,J_s)\in\mathcal{K}}\det A^{I_s,J_s}$, as desired.
\hfill\qedsymbol

\section{Acknowledgements}

The authors would like to thank an anonymous referee whose suggestions greatly improved the readability of our article.


\end{document}